\newtheorem{theorem}{Theorem}[section]
\newtheorem{corollary}[theorem]{Corollary}
\newtheorem{lemma}[theorem]{Lemma}
\newtheorem{proposition}[theorem]{Proposition}
 \theoremstyle{definition}
	\newtheorem{remark}{Remark}[section]
\numberwithin{equation}{section}
\begin{document}
	
    \title{On Berger's isoperimetric problem}

	\author{Fan Kang}
\address{School of Mathematical Sciences, Capital Normal University, 100048, Beijing, China}
	\email{fankangmath@163.com}

	\begin{abstract}
Berger's isoperimetric problem asks if the flat equilateral torus is $\lambda_1$-maximal. In 1996, Nadirashvili first gave a positive answer. In this paper, we use El Soufi-Ilias-Ros's method in \cite{el1996premiere} and Bryant's result in \cite{bryant2015conformal} to give a new proof.
	\end{abstract}
	
	\maketitle

	\section{Introduction}

 Let $M$ be a Riemannian surface endowed with a Riemannian metric $g$, and denote the corresponding measure by $dv_g$. Let $\Delta_g$ be the associated Laplace-Beltrami operator on $M$ with the first positive eigenvalue $\lambda_1$. For any metric $g$ on $M$,  we know that the restriction of $\lambda_1(g)A(g)$ to the conformal class $[g]=\{\omega g: \omega>0\}$ of $g$ is bounded (see El Soufi-Ilias \cite{el1986immersions}, Li-Yau \cite{yau1982seminar}), where $A(g)$ denotes the Riemannian volume of $g$. Then, denote by 
 \[
 \Lambda_1(M,[g])=\sup\limits_{g'\in [g]}\lambda_1(g')A(g'),\quad \text{and}\quad \Lambda_1(M)=\sup\limits_{[g]}\Lambda_1(M,[g])
 \]
 the first conformal eigenvalue of $[g]$ and the first topological eigenvalue of $M$, respectively. If there exists a smooth metric $g$ on $M$ such that $\lambda_1(g)A(g)=\Lambda_1(M)$,  we call $(M,g)$ a $\lambda_1$-maximal manifold.

Some global maximizers of $\Lambda_1(M)$ have been known. For example, the maximizer on the sphere is given by the round metric by Hersch \cite{hersch1970quatre}, the real projective plane by the constant curvature metric by Li-Yau \cite{li1982new}. The maximizer on the torus $T$ is given by the equilateral torus, which is conjectured by Berger \cite{berger1973premieres} and confirmed by Nadirashvili \cite{nadirashvili1996berger}. Nadirashvili used methods of calculus of variations to prove the existence of a maximizer, showing that the maximal metrics has to correspond to minimal immersions into the sphere. Also, he applied Montiel-Ros's argument \cite{montiel1986minimal} to conclude that the maximal metric is flat. However, Nadirashvili and Montiel-Ros's argument hold only for smooth metrics. In 2019, Cianci-Karpukhin-Medvedev \cite{cianci2019branched} ruled out the maximizers of $\Lambda_1(T)=\frac{8\pi^2}{\sqrt{3}}$ from a larger class of metrics. On the surface of genus two, it is conjectured by Jakobson-Levitin-Nadirashvili-Nigam \cite{jakobson2005large}, and confirmed by Nayatani-Shoda \cite{nayatani2019metrics}, that the maximizer is given by a metric on the Bolza surface with constant curvature one and six conical singularities.
For the Klein bottle, we can refer to \cite{nadirashvili1996berger}, \cite{jakobson2006extremal}, \cite{el2006unique}, \cite{cianci2019branched}. 

In this paper, we give a new proof of the following theorem.
  \begin{theorem}[\textup{\cite{nadirashvili1996berger}}\label{Berger Thm}]
$\Lambda_{1}(T)=\frac{8\pi^2}{\sqrt{3}}$, and the flat equilateral torus is the only $\lambda_{1}$-maximal surface.
\end{theorem}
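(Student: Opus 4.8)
The plan is to bound the scale-invariant quantity $\lambda_1(g)A(g)$ of an arbitrary metric on $T$ by the conformal volume of its conformal class, and then to maximize the conformal volume over the moduli space of conformal structures on the torus. By the Li--Yau bound \cite{yau1982seminar,el1986immersions}, in the form exploited by El Soufi--Ilias--Ros \cite{el1996premiere}, every metric $g$ satisfies
\[
\lambda_1(g)A(g)\le 2\,V_c(T,[g]),
\]
where $V_c(T,[g])$ denotes the conformal volume of the conformal class, and equality holds precisely when $(T,g)$ is, up to homothety, minimally immersed in a round sphere by its first eigenfunctions. This reduces Theorem~\ref{Berger Thm} to two independent facts: that the equilateral flat metric attains $\lambda_1 A=\tfrac{8\pi^2}{\sqrt3}$ (the lower bound), and that $V_c(T,[g])\le\tfrac{4\pi^2}{\sqrt3}$ for every conformal class, with equality only at the equilateral structure (the upper bound together with uniqueness).

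First I would establish the lower bound by hand. Writing the equilateral torus as $\mathbb{C}/\Lambda$ with $\Lambda=\mathbb{Z}+\mathbb{Z}e^{i\pi/3}$, the six shortest vectors of the dual lattice form three pairs $\pm v_j$ and produce six first eigenfunctions realizing a minimal immersion into $S^5$. A direct computation with the area-normalized hexagonal lattice gives $\lambda_1=\tfrac{8\pi^2}{\sqrt3}$ with $A=1$, so $\lambda_1(g_{\mathrm{eq}})A(g_{\mathrm{eq}})=\tfrac{8\pi^2}{\sqrt3}$ and hence $\Lambda_1(T)\ge\tfrac{8\pi^2}{\sqrt3}$. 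Since this immersion is balanced and by first eigenfunctions, equality holds in the displayed inequality, so $V_c(T,[g_{\mathrm{eq}}])=\tfrac{4\pi^2}{\sqrt3}$.

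The crux is the upper bound, and here I would invoke Bryant's computation of the conformal volume of $2$-tori \cite{bryant2015conformal}. Parametrizing conformal structures by $\tau$ in the standard fundamental domain of the modular group, the conformal volume is a function of $\tau$ invariant under the modular group whose supremum over moduli space is attained exactly at $\tau=e^{i\pi/3}$ and equals $\tfrac{4\pi^2}{\sqrt3}$. Feeding this into the displayed inequality yields $\Lambda_1(T,[g])\le\tfrac{8\pi^2}{\sqrt3}$ for every conformal class, hence $\Lambda_1(T)\le\tfrac{8\pi^2}{\sqrt3}$.

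For uniqueness I would analyze the equality cases. If $g$ is $\lambda_1$-maximal then both inequalities above become equalities: $V_c(T,[g])=\tfrac{4\pi^2}{\sqrt3}$ forces $[g]$ to be the equilateral conformal class by the rigidity part of Bryant's result, while $\lambda_1(g)A(g)=2V_c(T,[g])$ forces $g$ to be minimally immersed by its first eigenfunctions; the Montiel--Ros rigidity \cite{montiel1986minimal} for minimal immersions of the equilateral torus then pins $g$ down to the flat equilateral metric up to homothety. I expect the genuine obstacle to be the upper bound, namely showing that the conformal volume over the \emph{entire} moduli space, including the control of degenerating thin tori near the cusp, is maximized at the hexagonal structure. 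This global optimization is exactly the content of Bryant's result, which I would use as a black box rather than reprove from scratch.
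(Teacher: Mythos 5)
Your proposal is correct in outline, but it reaches the theorem by a genuinely different route from the paper. You run everything through the conformal volume: the Li--Yau/El Soufi--Ilias inequality $\lambda_1(g)A(g)\le 2V_c(T,[g])$, Bryant's evaluation of $V_c$ on each conformal class, a maximization over moduli space, and Montiel--Ros rigidity for uniqueness. The paper never invokes $V_c$ as such. Instead, for each $(a,b)\in\mathscr{M}$ it builds explicit $S^3$-valued test maps from eigenfunctions of the flat torus ($\Phi_{\sqrt 2}$ on $\mathscr{M}_1=\{b\le\sqrt2\}$, $\Phi_b$ on $\mathscr{M}_2$), balances them by Hersch's lemma, applies the min--max principle, and then uses the El Soufi--Ilias--Ros energy-ratio identity \eqref{identity} to transport the resulting energy bound from $T(a,b)$ to the rectangular torus $T(0,b)$, where $\sup_{\gamma}A(\gamma\circ\psi_b)$ is computed in Lemma \ref{main lemma}; the theorem follows by optimizing the resulting explicit bounds over $\mathscr{M}_1$ and $\mathscr{M}_2$. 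The practical difference is how much of Bryant's work each argument consumes: yours needs his results for \emph{all} conformal classes --- the resolution of the Montiel--Ros conjecture, $V_c=\frac{4\pi^2 b}{1+b^2+a^2-a}$ for $(a-\frac12)^2+b^2\le\frac94$, plus the bound \eqref{sup of A} for thin tori --- whereas the paper's transfer trick requires only the rectangular case $a=0$, which is precisely what makes its proof ``new''; conversely, your route is shorter and more conceptual if the quoted theorems are granted wholesale, while the paper's route also yields the explicit per-class bounds of Corollary 1.2 as a by-product.

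Two caveats on your write-up. First, the assertion that the modular-invariant function $\tau\mapsto V_c$ attains its supremum $\frac{4\pi^2}{\sqrt3}$ exactly at $\tau=e^{i\pi/3}$ is not literally a theorem of \cite{bryant2015conformal}: Bryant supplies the per-class value (and, for $(a-\frac12)^2+b^2>\frac94$, only the upper bound $V_c\le\sup_\gamma A(\gamma\circ\psi_{ab})$, which does suffice), so you must still carry out the two-region optimization yourself --- maximizing $\frac{4\pi^2 b}{1+b^2+a^2-a}$ over the Montiel--Ros region, where the maximum is attained strictly at $(\frac12,\frac{\sqrt3}{2})$, and checking that the right-hand side of \eqref{sup of A} stays strictly below $\frac{4\pi^2}{\sqrt3}$ on the complement. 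This is elementary calculus and the claims are true, but it is a step of the proof, not part of the black box; the rigidity you invoke for uniqueness of the conformal class also lives in this step. Second, your uniqueness argument (Li--Yau equality forces a minimal immersion by first eigenfunctions, then \cite{montiel1986minimal} pins down the flat equilateral metric) is valid for smooth metrics, which matches the paper's definition of $\lambda_1$-maximal; the paper instead cites the equivalent uniqueness statement of El Soufi--Ilias--Ros and El Soufi--Ilias for the class $a^2+b^2=1$, so the two treatments of uniqueness are parallel rather than divergent.
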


Recall that any torus is conformally equivalent to a flat torus $(T_{\Gamma}=\mathbb{R}^2/\Gamma,g_{\Gamma})$, where $\Gamma$ is a 2-dimensional lattice and $g_{\Gamma}$ is the metric on $\mathbb{R}^2/\Gamma$ induced from the Euclidean metric on $\mathbb{R}^2$.  Up to isometry and dilations, there is a one-to-one correspondence between the moduli space of flat tori $T(a,b)=\mathbb{R}^2/\Gamma(a,b)$, where $\Gamma(a,b)$ is the lattice $\mathbb{Z}(1,0)\oplus\mathbb{Z}(a,b)$, and the fundamental region 
\begin{equation}\label{Moduli space}
    \mathscr{M}=\Big\{(a,b)\in\mathbb{R}^2:0\leq a \leq \frac{1}{2},\,b\geq\sqrt{1-a^2}\Big\}.
\end{equation}
In addition, it is well known that the eigenvalues of the Laplacian of $(T(a,b),g_{\Gamma})$ are given by
\[
\lambda^{a,b}_{pq}=4\pi^2\Big\{q^2+\left(\frac{p-q a}{b}\right)^2\Big\}
\]
for $(p,q)\in\big\{(p,q)\in\mathbb{Z}\times\mathbb{Z}: q\geq 0 \text{ or } q=0 \text{ and } p\geq 0\big\}$. The induced functions on $T(a,b)$ from
\begin{equation*}
  \begin{aligned}
     & f^{a,b}_{pq}(x,y)=\cos{2\pi\langle \Big(q,\frac{p-q a}{b}}\Big),(x,y)\rangle,\\
     & g^{a,b}_{pq}(x,y)=\sin{2\pi\langle \Big(q,\frac{p-q a}{b}}\Big),(x,y)\rangle
  \end{aligned}  
\end{equation*}
are eigenfunctions associated to $\lambda_{pq}$.

For case $a^2+b^2=1$, the first conformal eigenvalue of these tori is maximized by the flat metric by El Soufi-Ilias-Ros \cite{el1996premiere}, El Soufi-Ilias \cite{el2003extremal}. In \cite{el1996premiere}, El Soufi-Ilias-Ros also gave an upper bound of the first conformal eigenvalue on tori in a fixed conformal class. The proof of their estimates in \cite{el1996premiere} relies on the conformal area of tori, a notion introduced by Li-Yau \cite{yau1982seminar}. In \cite{montiel1986minimal}, Montiel-Ros gave the conformal area of some rectangular tori and conjectured that the conformal area of the tori $T(a,b)$ is precisely $\frac{4\pi^2b}{1+b^2+a^2-a}$, when $(a-\frac{1}{2})^2+b^2<\frac{9}{4}$. The conjecture was confirmed by Bryant \cite{bryant2015conformal}, and he also extended it to the case  $(a-\frac{1}{2})^2+b^2=\frac{9}{4}$. Furthermore, the supremum of the area functional (see the precise definition in Section 2) for conformal immersions $\psi_{ab}$ from $T(a,b)$ into $S^5$ is shown in \cite{bryant2015conformal} as
\begin{equation}\label{sup of A}
\sup_{\gamma} A(\gamma \circ \psi_{ab}) = \frac{8\pi^2 b \sqrt{b^2 + a^2 - a + 1}}{3\sqrt{3}(b^2 + a^2 - a)},
\end{equation}
when $(a - \frac{1}{2})^2 + b^2 > \frac{9}{4}$. Here, $\gamma$ ranges over all conformal transformations of $S^5$ and $\psi_{ab}$ is defined by 
\begin{equation*}
\begin{aligned}
\psi_{ab}=&\frac{1}{\sqrt{1+b^2+a^2-a}}\Big(\sqrt{b^2+a^2-a}f^{a,b}_{10}, \sqrt{b^2+a^2-a}g^{a,b}_{10}, \\
&\sqrt{1-a}f^{a,b}_{01},\sqrt{1-a}g^{a,b}_{01},\sqrt{a}f^{a,b}_{11}, \sqrt{a}g^{a,b}_{11}\Big).
\end{aligned}
\end{equation*} In this paper, we focus only on the case of tori $T(0,b)$, and use the result \eqref{sup of A} to provide a new proof of Theorem \ref{Berger Thm}. As a by-product, we obtain an upper bound for the first conformal eigenvalue on tori. That is,
\begin{corollary}\label{eigenvalue-estimate}
    For all $(a,b)\in\mathscr{M}$, we have
    \begin{equation}\label{eigenvalue estimation}
    \Lambda_1(T_{\Gamma},[g_{\Gamma}])\leq \frac{8\pi^2}{\sqrt{6}b}\frac{\sqrt{2+a^2+b^2+L}}{a^2+b^2+L}
\Big(a^2+b^2+\frac{L}{3} \Big)
\end{equation}
with $L:=\sqrt{(a^2+b^2)(8+a^2+b^2)}$.
Moreover, when $a^2+b^2>1,$ the inequality is strict.
\end{corollary}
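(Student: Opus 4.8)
The plan is to run the El Soufi–Ilias–Ros argument of \cite{el1996premiere} and feed it Bryant's value \eqref{sup of A}. The foundational step is the variational inequality
\[
\Lambda_1(T_\Gamma,[g_\Gamma])\ \le\ 2\sup_{\gamma}A(\gamma\circ\phi),
\]
valid for \emph{every} conformal immersion $\phi\colon(T_\Gamma,[g_\Gamma])\to S^n$, with $\gamma$ ranging over the conformal group of $S^n$. I would prove it in the standard way: for an arbitrary $g'\in[g_\Gamma]$, Hersch's centre–of–mass lemma supplies a conformal $\gamma$ making each component of $\gamma\circ\phi$ orthogonal to the constants in $L^2(dv_{g'})$; using these components as trial functions in the min–max for $\lambda_1(g')$, summing over them so that $\sum_i(\gamma\circ\phi)_i^2\equiv1$ clears the denominators, and invoking the conformal invariance of the Dirichlet energy in dimension two together with the identity (energy) $=2\cdot(\mathrm{area})$ for conformal maps, yields $\lambda_1(g')A(g')\le 2A(\gamma\circ\phi)$. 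Since $g'$ is arbitrary and the right–hand side is conformally invariant, the displayed bound follows.

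By conformal invariance of the left side I am free to choose $\phi$ as favourably as possible. Note that $\psi_{ab}$ is the \emph{unique} conformal immersion into $S^5$ built from the three pairs $f^{a,b}_{10},f^{a,b}_{01},f^{a,b}_{11}$: the conformality equations $\Phi_x\cdot\Phi_y=0$ and $|\Phi_x|^2=|\Phi_y|^2$ rigidly fix the weights $b^2+a^2-a,\ 1-a,\ a$, so that merely quoting \eqref{sup of A} only produces the coarser estimate $\Lambda_1\le 2\sup_\gamma A(\gamma\circ\psi_{ab})$, which is strictly weaker than \eqref{eigenvalue estimation}. To reach \eqref{eigenvalue estimation} I would instead introduce a one–parameter family of conformal immersions $\phi_t$ into $S^5$ adapted to the conformal class of $T(a,b)$, the extra degree of freedom being created by enlarging the set of eigenfunction pairs used (for instance adjoining the pair $(1,-1)$), so that Bryant's formula \eqref{sup of A}, applied to the torus parameters associated with $\phi_t$, evaluates the conformal area along the family as an explicit function $\mathcal A(t)$. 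Minimising $2\mathcal A(t)$ over $t$ then sharpens the bound.

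The heart of the matter is this optimisation, and it is where I expect $L$ to be born: the stationarity condition $\mathcal A'(t)=0$ should reduce to a quadratic whose relevant branch is governed by $L=\sqrt{(a^2+b^2)(8+a^2+b^2)}$, and re–substituting the optimal parameter into $2\mathcal A(t)$ should collapse exactly to the right–hand side of \eqref{eigenvalue estimation}, with the factor $3\sqrt3$ in \eqref{sup of A} reorganising into the $\tfrac{L}{3}$ and the $\sqrt6$. The two obstacles I anticipate are: (i) writing down the family $\phi_t$ and checking that it is genuinely conformal for every $t$ while still being evaluable through \eqref{sup of A}; and (ii) justifying \eqref{eigenvalue estimation} on all of $\mathscr{M}$, since \eqref{sup of A} is only asserted for $(a-\tfrac12)^2+b^2>\tfrac94$, which I would handle by establishing the estimate first on the valid region and then extending by continuity of both sides in $(a,b)$. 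Finally, the strictness for $a^2+b^2>1$ should follow from the equality rigidity in the first displayed inequality: equality there forces the centred immersion to be minimal, i.e.\ built from first eigenfunctions alone. Since $\lambda^{a,b}_{10}=4\pi^2/b^2\le\lambda^{a,b}_{01}=4\pi^2(a^2+b^2)/b^2$ with $\lambda^{a,b}_{10}=\lambda^{a,b}_{01}$ exactly on the arc $a^2+b^2=1$, the first eigenvalue carries enough independent eigenfunctions to produce a minimal immersion precisely there; for $a^2+b^2>1$ it is realised only by the single pair $f^{a,b}_{10},g^{a,b}_{10}$, no first–eigenfunction immersion exists, and the inequality is therefore strict.
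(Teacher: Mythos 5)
Your foundational inequality is fine, but the engine of your proof---a one-parameter family of conformal immersions $\phi_t$ of $T(a,b)$ into $S^5$ whose conformal-area suprema can be ``evaluated through \eqref{sup of A}''---does not exist as described, and this is a genuine gap rather than a technical obstacle. Bryant's formula \eqref{sup of A} is a statement about the single immersion $\psi_{ab}$ attached to each torus; as you yourself observe, conformality rigidly pins the weights, so there is no conformal deformation of $\psi_{ab}$ within reach of \eqref{sup of A}, and adjoining further eigenfunction pairs (such as $(1,-1)$) produces immersions for which the supremum of the area functional over the conformal group is unknown---computing such a supremum is precisely the hard content of Bryant's theorem, not something one can quote. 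The idea you are missing is that the test map need not be conformal on $T(a,b)$ at all. The paper uses the eigenmaps $\Phi_{b_0}\colon T(a,b)\to S^3$, $b_0\in[1,\infty)$, built from the $(1,0)$ and $(0,1)$ eigenfunction pairs with weights $b_0/\sqrt{1+b_0^2}$ and $1/\sqrt{1+b_0^2}$; these are conformal only on the auxiliary torus $T(0,b_0)$, but the min--max step needs only $\sum_i u_i^2\equiv 1$ and Hersch centering, giving $\lambda_1(g)A(g)\le 2E(\gamma\circ\Phi_{b_0})$. The conformal group is then controlled by the El Soufi--Ilias--Ros identity \eqref{identity}: the ratio $E(\gamma\circ\Phi_{b_0})/E(\gamma\circ\psi_{b_0})$ is independent of $\gamma$, and since $\psi_{b_0}$ and $\gamma\circ\psi_{b_0}$ are conformal on $T(0,b_0)$, energy equals area there, so $\sup_\gamma E(\gamma\circ\Phi_{b_0})=\sup_\gamma A(\gamma\circ\psi_{b_0})\cdot E(\Phi_{b_0})/A(\psi_{b_0})$ is given by Lemma \ref{main lemma} (the rectangular, $S^3$ case; formula \eqref{sup of A} for general $(a,b)$ is never needed). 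The free parameter is thus the auxiliary modulus $b_0$, not a conformal deformation of an immersion of $T(a,b)$, and minimizing $F(b_0)=\frac{8\pi^2\sqrt{b_0^2+1}}{3\sqrt{3}\,b_0^2}\cdot\frac{b_0^2+a^2+b^2}{b}$ over $b_0>\sqrt{2}$ gives $(b_0')^2=(a^2+b^2+L)/2$ and exactly the right-hand side of \eqref{eigenvalue estimation}; that is where $L$ is born.

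Your strictness argument has a second gap: equality would force the components of the centred map to be first eigenfunctions of the \emph{unknown} extremal metric $g'=\omega g_\Gamma$, not of the flat metric, so your count of flat first eigenfunctions and the observation that $\lambda^{a,b}_{10}<\lambda^{a,b}_{01}$ for $a^2+b^2>1$ says nothing about $g'$. What must be excluded is that the specific functions $\gamma\circ\Phi_{b_0'}$ satisfy $\Delta_{g_\Gamma}u=\lambda_1(g')\,\omega\,u$ for some positive conformal factor $\omega$; the paper does this by a direct computation, showing that
\[
(\gamma\circ\Phi_{b_0'})_4\,\Delta_{g_\Gamma}(\gamma\circ\Phi_{b_0'})_2-(\gamma\circ\Phi_{b_0'})_2\,\Delta_{g_\Gamma}(\gamma\circ\Phi_{b_0'})_4\neq 0,
\]
which is incompatible with both components being eigenfunctions of the same operator $\Delta_{g'}$ with the same eigenvalue. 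Without an argument of this kind, applicable to an arbitrary metric in the conformal class, the claimed strictness for $a^2+b^2>1$ is unsupported.
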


\begin{remark}
This corollary gives a better upper bound of $\Lambda_1(T_{\Gamma},[g_{\Gamma}])$ compared to the one of El Soufi-Ilias-Ros in \cite{el1996premiere}. To be more specific, in \cite{el1996premiere}, the authors showed that for all $(a,b)\in\mathscr{M}$ and $g$ conformally equivalent to $g_{\Gamma}$,  
\begin{equation}\label{el-estimate}  
\lambda_1(g)A(g)\leq\frac{3\pi^2}{2b}\big(a^2+b^2+\frac{5}{3}\big). 
\end{equation}
Our result improves this bound, as the right-hand side of \eqref{eigenvalue estimation} is smaller than that of \eqref{el-estimate}.
\end{remark}
    \section{Proof of the main theorem}

    Let $S^n$ be the n-dimensional unit sphere, $D^{n+1}$ the open unit ball bounded by $S^n$ in $\mathbb{R}^{n+1}$, and $G(n)$ the conformal group of $S^n$. For each point $\gamma\in D^{n+1}$, we consider the map, also denoted by $\gamma$, $\gamma\colon S^n \to S^n$ given by
    \begin{equation}\label{map gamma}
    \gamma(p)=\frac{p+\big(\beta\langle p,\gamma \rangle+\alpha\big)\gamma}{\alpha\big(\langle p,\gamma \rangle+1\big)}
    \end{equation}
    for all $p\in S^n$, where $\alpha=\frac{1}{\sqrt{1-|\gamma|^2}}$, $\beta=(\alpha-1)\frac{1}{|\gamma|^2}$ and $\langle \, ,\, \rangle$ denotes the usual inner product on $\mathbb{R}^{n+1}$. Then $\gamma$ is a conformal transformation of $S^{n}$, and each transformation of $G(n)$ can be expressed by $\xi\circ \gamma$ for some orthogonal transformation $\xi$. %and $\gamma$ is given by \eqref{map gamma} for some $\gamma \in D^{n+1}$.

Let $M$ be a compact surface. For each branched conformal immersion $\psi\colon M\to S^n$, we can consider the area function $A\colon G(n)\to \mathbb{R}$, which maps a conformal transformation $\gamma$ of $S^n$ on the area induced from the immersion $\gamma\circ \psi$. We will denote it by $A(\gamma\circ \psi)$. Since the area function restricts to conformal transformations of the type \eqref{map gamma}, the area function can be defined on the unit ball, $A\colon D^{n+1}\to\mathbb{R}$. Then, for each $\gamma\in D^{n+1}$, we obtain
        \begin{equation}\label{area function}
        A(\gamma\circ \psi)=\frac{1}{2}\int_{M}\frac{1-|\gamma|^2}{\big(1-\langle\psi,\gamma\rangle\big)^2}|\nabla\psi|^2\,dM.
        \end{equation}
Note that for the identity map $\text{id}:S^n\to S^n$, which corresponds to the origin $0$ in $D^{n+1}$, we simply denote $A(\text{id} \circ\psi)=A(\psi)$. As described above, for the immersion $\psi_{b}:=\psi_{0b}$, when $\gamma$ ranges over all conformal transformations, the supremum of the area functional $A(\gamma\circ\psi_{b})$ has already been provided by Bryant \cite{bryant2015conformal}. For completeness, we restate them here.
   
\begin{lemma}[\textup{\cite{bryant2015conformal}}]\label{main lemma}
Let $M$ be a torus conformally equivalent to $T_{b}=T(0,b)$, and let $\psi_{b}\colon T_b\to S^3$ be the following immersion:
\[
\psi_{b}(x,y)=\frac{1}{\sqrt{1+b^2}}\Big(b\cos\frac{2\pi y}{b},b\sin\frac{2\pi y}{b}, \cos{2\pi x},\sin{2\pi x}\Big).
\]
Then, for $1\leq b \leq\sqrt{2}$, we have
\begin{equation}\label{case 1}
\sup\limits_{\gamma\in G(3)}A(\gamma\circ\psi_{b})=\frac{4\pi^2 b}{1+b^2};
\end{equation}
for $b>\sqrt{2}$, 
\begin{equation}\label{case 2}
  \sup\limits_{\gamma\in G(3)}A(\gamma\circ\psi_{b})= \frac{8\pi^2\sqrt{b^2+1}}{3\sqrt{3}b}.
\end{equation}
\end{lemma}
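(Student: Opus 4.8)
The plan is to compute the area functional \eqref{area function} explicitly for $\psi_b$ and then maximize it over the ball $D^4$ by combining a symmetry reduction with elementary calculus. First I would note that, in the flat conformal coordinates $(x,y)\in[0,1]\times[0,b]$, the immersion $\psi_b$ is conformal with $|\nabla\psi_b|^2\equiv\frac{8\pi^2}{1+b^2}$ constant, so that \eqref{area function} becomes
\begin{equation*}
A(\gamma\circ\psi_b)=\frac{4\pi^2}{1+b^2}\int_{T_b}\frac{1-|\gamma|^2}{\big(1-\langle\psi_b,\gamma\rangle\big)^2}\,dx\,dy .
\end{equation*}
Since $\psi_b(T_b)$ is the product torus $r_1S^1\times r_2S^1\subset S^3$ with $r_1=\frac{b}{\sqrt{1+b^2}}$ and $r_2=\frac{1}{\sqrt{1+b^2}}$, the functional $\gamma\mapsto A(\gamma\circ\psi_b)$ is invariant under the $O(2)\times O(2)$ acting in the $(1,2)$- and $(3,4)$-planes, since these rotations pull back to translations of $T_b$. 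Hence it suffices to treat $\gamma=(s,0,t,0)$ with $s,t\geq0$ and $s^2+t^2<1$. Writing $\theta=\frac{2\pi y}{b}$, $\phi=2\pi x$, $p=\frac{bs}{\sqrt{1+b^2}}$ and $q=\frac{t}{\sqrt{1+b^2}}$, I would reduce the functional to
\begin{equation*}
A(\gamma\circ\psi_b)=\frac{b\,(1-s^2-t^2)}{1+b^2}\,J(p,q),\qquad J(p,q)=\int_0^{2\pi}\!\!\int_0^{2\pi}\frac{d\theta\,d\phi}{\big(1-p\cos\theta-q\cos\phi\big)^2}.
\end{equation*}

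The decisive step is to show that, for $b\geq1$, the supremum is attained on the axis $t=0$: one should never move the conformal centre toward the smaller circle. I would prove this by fixing $s^2+t^2$ and showing that, along the corresponding circle, the maximum of $A$ occurs at $t=0$. Because $1-s^2-t^2$ is constant there, this is equivalent to maximizing $J$, whose argument $(p,q)$ traces an ellipse $\frac{p^2}{b^2}+q^2=\mathrm{const}$; the claim becomes that $J$ decreases as mass is transferred from $p$ to $q$ along this ellipse, which after differentiating under the integral sign reduces to the pointwise inequality $p\,\partial_q J\leq b^2\,q\,\partial_p J$, where $\partial_pJ$ and $\partial_qJ$ are the manifestly positive integrals of $\frac{2\cos\theta}{(1-p\cos\theta-q\cos\phi)^3}$ and $\frac{2\cos\phi}{(1-p\cos\theta-q\cos\phi)^3}$. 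This is the main obstacle: since $J$ is symmetric in its two arguments, the weight $b^2\geq1$ is indispensable, and the inequality has to be extracted by a genuine comparison of the two integrals, for instance by splitting the $(\theta,\phi)$-domain according to the signs of the cosines. The local version of this statement is already visible in the Hessian of $A$ at $\gamma=0$, whose $(3,4)$-block has coefficient proportional to $1-2b^2<0$ for every $b\geq1$.

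Granting the confinement to $t=0$, the remainder is a one-variable computation. Using $\int_0^{2\pi}(1-p\cos\theta)^{-2}\,d\theta=\frac{2\pi}{(1-p^2)^{3/2}}$ gives $J(p,0)=\frac{4\pi^2}{(1-p^2)^{3/2}}$ and, after substituting $p=\frac{bs}{\sqrt{1+b^2}}$,
\begin{equation*}
A\big((s,0,0,0)\circ\psi_b\big)=\frac{4\pi^2 b\sqrt{1+b^2}\,(1-s^2)}{\big(1+b^2-b^2s^2\big)^{3/2}} .
\end{equation*}
Differentiating in $\sigma=s^2$, the unique interior critical point sits at $\sigma=\frac{b^2-2}{b^2}$, which lies in $[0,1)$ if and only if $b\geq\sqrt2$. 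For $1\leq b\leq\sqrt2$ the function is non-increasing on $[0,1)$, so the maximum is at $s=0$ and equals $\frac{4\pi^2 b}{1+b^2}$, giving \eqref{case 1}; for $b>\sqrt2$ the critical value is attained, and since the denominator reduces to $3^{3/2}$ there, a direct substitution yields $\frac{8\pi^2\sqrt{b^2+1}}{3\sqrt3\,b}$, giving \eqref{case 2}. The transition at $b=\sqrt2$ matches exactly the sign change of the $(1,2)$-block of the Hessian at $\gamma=0$, whose coefficient is proportional to $b^2-2$.
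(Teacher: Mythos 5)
Your setup (the symmetry reduction to $\gamma=(s,0,t,0)$, the double integral $J(p,q)$, and the one--variable computation on the axis $t=0$) coincides with the paper's, and your endgame calculus is correct: the axis critical point $s^2=\frac{b^2-2}{b^2}$ exists precisely when $b\geq\sqrt{2}$ and reproduces the two claimed values. The problem is your ``decisive step'' --- that on every circle $s^2+t^2=\rho^2$ the functional is maximized at $t=0$, equivalently the pointwise inequality $p\,\partial_qJ\leq b^2 q\,\partial_pJ$. This step is not only left unproven (you yourself call it the main obstacle and only gesture at a sign--splitting argument); it is \emph{false}. The integrand of $J$ degenerates as $p+q\to1$, and one checks $J(p,q)\sim\frac{2\pi}{(1-p-q)\sqrt{pq}}$ in that limit. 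On the circle of radius $\rho$, the angle $\alpha_0$ with $(s,t)=\rho\left(\sqrt{r_1},\sqrt{r_2}\right)$ is sent to $(p,q)=\rho\,(r_1,r_2)$, where $p+q=\rho$; hence $A=\frac{b(1-\rho^2)}{1+b^2}J\to 4\pi$ as $\rho\to1$ (this is exactly Li--Yau concentration at a point of the image torus), while at $t=0$ on the same circle $A=\frac{b(1-\rho^2)}{1+b^2}\cdot\frac{4\pi^2}{(1-r_1\rho^2)^{3/2}}\to 0$. So for $\rho$ close to $1$ the maximum over the circle is nowhere near $t=0$, and your differential inequality must reverse somewhere between $\alpha=0$ and $\alpha_0$; no decomposition of the $(\theta,\phi)$-domain can establish it. (Your Hessian computations at $\gamma=0$ are correct, but they are purely local and cannot certify the global confinement.) The number $4\pi$ happens to lie below both claimed suprema, so the lemma itself is safe, but your route to it is broken.

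This breakdown point is precisely what the paper's proof is engineered around. There one first replaces $E(k)$ by the algebraic bound $\frac{\pi}{2}\left(1-\frac{k^2}{4}\right)$, which is sharp exactly when $k=0$, i.e.\ on the two axes, so nothing is lost at the eventual maximizer; this produces the function $I(\lambda,\mu)$. The point $(r_1,r_2)$, where your monotonicity fails, appears there as the unique point of $\overline{\Omega}$ to which $I$ has no continuous extension, and the paper shows that near it $I$ stays below $\frac{3}{8\sqrt{r_1(1-r_1)}}$, which is strictly less than the claimed supremum $\frac{2}{3\sqrt{3}\,r_1\sqrt{1-r_1}}$ when $r_1>\frac{2}{3}$. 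The supremum is then pinned down by examining the entire boundary of a compact region excluding a neighborhood of $(r_1,r_2)$, together with a Gr\"obner-basis computation ruling out interior critical points; the maximization is treated as genuinely two--dimensional rather than collapsed to circles. If you wish to salvage your more elementary outline, you would need to restrict any monotonicity claim to a region bounded away from $(r_1,r_2)$ and handle a neighborhood of that point by a separate concentration estimate --- which is, in essence, what the paper does.
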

\begin{proof}
For completeness and to provide more computation details, we give the proof below. 
We basically followed Bryant's idea in \cite{bryant2015conformal}. 

To begin with, we substitute $\psi_{b}$ into equation \eqref{area function}, and denote $r_1=\frac{b^2}{1+b^2}$, $r_2=\frac{1}{1+b^2}$, $t=\frac{2\pi y}{b}$, and $s=2\pi x$. For any $\gamma=(a_1,\dots,a_4)\in\mathbb{R}^4$ with $|\gamma|^2<1$, we have
\begin{equation}\label{area-ftn1}
A(\gamma \circ \psi_b) = \frac{b}{1+b^2} \int_{0}^{2\pi} \int_{0}^{2\pi} \frac{1 - |\gamma|^2}{\big(1 - \langle \psi_b(s,t),\gamma \rangle \big)^2} \,ds\,dt.
\end{equation}
Since $\psi_b$ is equivariant with respect to torus of rotations in $SO(4)$, one can apply a rotation from this torus to reduce to the cases in which $a_1,a_3\geq 0$ and $a_2=a_4=0$.
Setting $\lambda=a_1\sqrt{r_1}$, $\mu=a_3\sqrt{r_2}$. Simplifying equation \eqref{area-ftn1} gives
\begin{equation}\label{area-ftn2}
    \begin{aligned}
 A(\gamma\circ\psi_b)=\frac{b}{1+b^2}\Big(1-\frac{\lambda^2}{r_1}-\frac{\mu^2}{r_2}\Big)\int_{0}^{2\pi}\int_{0}^{2\pi}\frac{ds\,dt}{\big(1-\lambda\cos{t}-\mu\cos{s}\big)^2}.
\end{aligned}
\end{equation}
Now, we have simplified the area functional  $A(\gamma \circ \psi_b)$ into a double integral expression with respect to $s$ and $t$, where $\lambda$ and $\mu$ are parameters. To find the supremum of the area functional $A(\gamma \circ \psi_b)$ within the region $|\gamma|^2 < 1$, we transform the problem into finding the supremum of the right-hand side of \eqref{area-ftn2} over the region 
\[
\Omega=\left\{(\lambda, \mu)\in\mathbb{R}^2: \frac{\lambda^2}{r_1}+\frac{\mu^2}{r_2}<1, \, \lambda\geq0, \, \mu\geq0\right\}.
\]
That is,
\[
\sup_{\gamma \in G(3)} A(\gamma \circ \psi_b) = \sup_{(\lambda, \mu) \in \Omega} \frac{b}{1 + b^2} \left(1 - \frac{\lambda^2}{r_1} - \frac{\mu^2}{r_2}\right) \int_{0}^{2\pi} \int_{0}^{2\pi} \frac{ds \, dt}{(1 - \lambda \cos t - \mu \cos s)^2}.
\]
Next, we compute this double integral
\[
\int_{0}^{2\pi}\int_{0}^{2\pi}\frac{ds\,dt}{\big(1-\lambda\cos{t}-\mu\cos{s}\big)^2}.
\]
We use the following two definite integral formulas. 
The first formula is  
\[
\int_{0}^{2\pi}\frac{ds}{(a-b\cos{s})^2} = \frac{2\pi a}{(a^2 - b^2)^{3/2}},
\]
where $a > |b|$. The second formula is  
\[
\int_{0}^{2\pi} \frac{(1 - a\cos{t})\,dt}{\big( (1 - a\cos{t})^2 - b^2 \big)^{3/2}} = \frac{4 E\left( \sqrt{4ab/\big(1 - (a-b)^2\big)} \right)}{\Big(1 - (a-b)^2\Big)^{1/2}\Big(1 - (a+b)^2\Big)},
\]
where $a, b \geq 0$, $a + b < 1$, and $E(k) = \int_{0}^{\pi/2} \sqrt{1 - k^2 \sin^2{\theta}}\, d\theta$, with $k \in [-1,1]$ being the complete elliptic integral of the second kind. The details of the second formula can be found in the appendix of Bryant \cite{bryant2015conformal}. Substituting these formulas into equality \eqref{area-ftn2}, we obtain
\begin{equation}\label{area-ftn3}
A(\gamma\circ\psi_b)=\frac{8\pi b}{1+b^2}\Big(1-\frac{\lambda^2}{r_1}-\frac{\mu^2}{r_2}\Big)\frac{E\left(\sqrt{4\lambda\mu/\big(1-(\lambda-\mu)^2\big)}\right)}{\Big(1-(\lambda-\mu)^2\Big)^{1/2}\Big(1-(\lambda+\mu)^2\Big)}.
\end{equation}
Since the complete elliptic integral of the second kind $E(k)$ can be expressed as a power series:
\[
E(k) = \frac{\pi}{2} \sum_{n=0}^{\infty} \left( \frac{(2n)!}{2^{2n}(n!)^2} \right)^2 \frac{k^{2n}}{1 - 2n},
\]
we have
\[
E(k) \leq \frac{\pi}{2} \left(1 - \frac{1}{4} k^2 \right).
\]
Substituting this inequality into equation \eqref{area-ftn3}, we get
\begin{equation*}
    \begin{aligned}
A(\gamma\circ\psi_b)\leq \frac{4\pi^2b}{1+b^2}\frac{\Big(1-\frac{\lambda^2}{r_1}-\frac{\mu^2}{r_2}\Big)\Big(1-(\lambda+\mu)^2+3\lambda\mu\Big)}{\Big(1-(\lambda-\mu)^2\Big)^{3/2}\Big(1-(\lambda+\mu)^2\Big)}.
\end{aligned}
\end{equation*}
Let \[
I(\lambda,\mu)=\frac{\Big(1-\frac{\lambda^2}{r_1}-\frac{\mu^2}{r_2}\Big)\Big(1-(\lambda+\mu)^2+3\lambda\mu\Big)}{\Big(1-(\lambda-\mu)^2\Big)^{3/2}\Big(1-(\lambda+\mu)^2\Big)},\quad(\lambda,\mu)\in\Omega.
\]
If we prove that 
\begin{equation}\label{main-step}
\sup\limits_{(\lambda,\mu)\in\Omega}\frac{4\pi^2b}{1+b^2}I(\lambda,\mu)=
\begin{cases} 
\frac{4\pi^2b}{1+b^2}, & \text{if } 1\leq b \leq\sqrt{2}, \\ \\[0pt] 
\frac{8\pi^2\sqrt{b^2+1}}{3\sqrt{3}b}, & \text{if } b>\sqrt{2},
\end{cases}
\end{equation}
based on the facts that $A(\psi_b)=\frac{4\pi^2b}{1+b^2}$ and  
$A(\gamma_1\circ\psi_b)=\frac{8\pi^2\sqrt{b^2+1}}{3\sqrt{3}b}$ for $\gamma_1=(\sqrt{3r_1-2},0,0,0)$,  
then the conclusion follows.
To prove \eqref{main-step}, we consider two cases separately. 

\textbf{Case 1: for \(1\leq b \leq \sqrt{2}\).} Noting that \(I(0,0)=1\), it suffices to prove that \(I(\lambda,\mu) \leq 1\) for all \((\lambda,\mu) \in \Omega\). Moreover, since \((1-c)^{3/2}\geq 1-\frac{3}{2}c\) when \(0\leq c\leq 1\), it remains only to show that
\[
Q(\lambda,\mu):=\Big(1-\frac{3}{2}(\lambda-\mu)^2\Big)\Big(1-(\lambda+\mu)^2\Big)-\Big(1-\frac{\lambda^2}{r_1}-\frac{\mu^2}{r_2}\Big)\Big(1-(\lambda+\mu)^2+3\lambda\mu\Big)\geq 0.
\]
We simplify $Q(\lambda,\mu)$ and obtain 
\begin{equation*}
    \begin{aligned}
        Q(\lambda,\mu)&=\Big(1-\frac{3}{2}(\lambda^2+\mu^2)+3\lambda\mu\Big)\Big(1-(\lambda+\mu)^2\Big)-\Big(1-\frac{\lambda^2}{r_1}-\frac{\mu^2}{r_2}\Big)\Big(1-(\lambda+\mu)^2+3\lambda\mu\Big)\\
        &=\Big(\frac{\lambda^2}{r_1}+\frac{\mu^2}{r_2}-\frac{3}{2}(\lambda^2+\mu^2)\Big)\Big(1-(\lambda+\mu)^2\Big)+3\lambda\mu\Big(\frac{\lambda^2}{r_1}+\frac{\mu^2}{r_2}-(\lambda+\mu)^2\Big).
    \end{aligned}
\end{equation*}
Since $1\leq b\leq \sqrt{2}$, it follows that $\frac{1}{2}\leq r_1\leq\frac{2}{3}$. Then, we obtain  
\[
\frac{\lambda^2}{r_1}+\frac{\mu^2}{r_2}-\frac{3}{2}(\lambda^2+\mu^2)\geq 0, \quad \text{for all } (\lambda,\mu)\in\mathbb{R}^2.
\]
And, since the matrix 
\[
 \begin{bmatrix} 
\frac{1}{r_1}-1 & -1 \\ 
-1 & \frac{1}{r_2}-1 
\end{bmatrix}
\]
is non-negative definite, the corresponding quadratic form $\frac{\lambda^2}{r_1}+\frac{\mu^2}{r_2}-(\lambda+\mu)^2$ is non-negative on $\mathbb{R}^2$.
For all $(\lambda,\mu)\in\Omega$, we have $\lambda \mu\geq0$, and $(\lambda+\mu)^2\leq\frac{\lambda^2}{r_1}+\frac{\mu^2}{r_2}< 1$. Then, we have $Q(\lambda,\mu)\geq0$ on $\Omega$. 

\textbf{Case 2: for $b>\sqrt{2}$.} Noting that $\quad  \frac{4\pi^2b}{1+b^2}=\frac{8\pi^2\sqrt{1+b^2}}{3\sqrt{3}b}\frac{3\sqrt{3}r_1\sqrt{1-r_1}}{2}$, it suffices to prove that 
\[
\sup\limits_{(\lambda,\mu)\in\Omega}I(\lambda,\mu)= \frac{2}{3\sqrt{3}r_1\sqrt{1-r_1}}.
\]
Now, we observe that $I(\lambda,\mu)$ can be continuously extended to $\overline{\Omega}$ except for the point $(r_1, r_2)$, where $\overline{\Omega}$ is the closure of $\Omega$. For the discontinuity point $(r_1, r_2)$, it is not hard to show that, for every $\varepsilon>0$, there is a $\delta>0$ such that for $(\lambda,\mu)\in \Omega$ satisfying $(\lambda-r_1)^2+(\mu-r_2)^2<\delta$, the inequalities
\[
0<I(\lambda,\mu)<\frac{3}{8\sqrt{r_1(1-r_1)}}+\varepsilon
\]
hold. Since 
\[
\frac{3}{8\sqrt{r_1(1-r_1)}}<\frac{2}{3\sqrt{3}r_1\sqrt{1-r_1}}
\]
when $2/3<r_1<1$, it follows that the supremum of $I(\lambda,\mu)$ in $\Omega$ must occur in some compact set 
\[
R_{\delta}=\left\{(\lambda,\mu):\,\frac{\lambda^2}{r_1}+\frac{\mu^2}{r_2}\leq 1,\,(\lambda-r_1)^2+(\mu-r_2)^2\geq\delta\text{ and } \lambda\geq0,\,\mu\geq 0\right\},
\]
for some $\delta>0$. 

To find the maximum of $I(\lambda,\mu)$ on $R_{\delta}$, we first determine the maximum value of $I(\lambda,\mu)$ on the boundary of $R_{\delta}$. Let $\Omega_1=\{(\lambda,\mu)\in R_{\delta}:\frac{\lambda^2}{r_1}+\frac{\mu^2}{r_2}=1\}$,
 and it is straightforward that $I(\lambda,\mu)=0$ for all $(\lambda,\mu)\in\Omega_{1}$. Let $\Omega_2=\{(\lambda,\mu)\in R_{\delta}:(\lambda-r_1)^2+(\mu-r_2)^2=\delta\}$. With the choice of $R_{\delta}$ as above, we can see that $I(\lambda,\mu)< \frac{2}{3\sqrt{3}r_1\sqrt{1-r_1}}$, for all $(\lambda,\mu)\in\Omega_2$.
 Let $\Omega_{3}=\{(\lambda,\mu)\in R_{\delta}:\mu=0\}$. For all $(\lambda,\mu)\in\Omega_3$, $I(\lambda,\mu)=\frac{1-\lambda^2/r_1}{(1-\lambda^2)^{3/2}}$. By direct computation, we find that 
 \[
\max_{\Omega_3}I(\lambda,\mu)=I(\sqrt{3r_1-2},0)=\frac{2}{3\sqrt{3}r_1\sqrt{1-r_1}}.
 \]
 Also, let $\Omega_{4}=\{(\lambda,\mu)\in R_{\delta}:\lambda=0\}$. Similar to the previous case, we have $\max\limits_{\Omega_4}I(\lambda,\mu)= I(0,0)=1$ on $\Omega_4$. Thus, due to $\partial R_{\delta}=\cup_{i=1}^{4}\Omega_i$, we obtain
\[
\max\limits_{\partial R_{\delta}}I(\lambda,\mu)=\frac{2}{3\sqrt{3}r_1\sqrt{1-r_1}}.
\]
Next, we claim that $I(\lambda,\mu)$ has no critical point in the interior of $R_{\delta}$. 
To compute the critical points of the function $I$ in the interior of $R_{\delta}$, we first compute its partial derivatives with respect to $\lambda,\,\mu$. That is,
\begin{equation*}
\begin{cases} 
\frac{\partial I(\lambda,\mu)}{\partial\lambda}=\frac{U(r_1,\lambda,\mu)}{r_1r_2\left(1-(\lambda-\mu)^2\right)^{5/2}\left(1-(\lambda+\mu)^2\right)^2}, \\  
\frac{\partial I(\lambda,\mu)}{\partial\mu}=\frac{V(r_1,\lambda,\mu)}{r_1r_2\left(1-(\lambda-\mu)^2\right)^{5/2}\left(1-(\lambda+\mu)^2\right)^2},
\end{cases}
\end{equation*}
where $U(r_1,\lambda,\mu)$ and $V(r_1,\lambda,\mu)$ are polynomials in $r_1,\,\lambda$ and $\mu$. 
This implies that the critical points satisfy $U(r_1,\lambda,\mu)=0$ and $V(r_1,\lambda,\mu)=0$.

To solve equations $U=0$ and $V=0$, one can use MAPLE to compute the Gr\"{o}bner basis of the ideal generated by $U(r_1,\lambda,\mu)$ and $V(r_1,\lambda,\mu)$ with respect to the pure lexicographical order $r_1>\lambda>\mu$. It is found that the first element of this Gr\"{o}bner basis is a polynomial in $\lambda$ and $\mu$ that factors as 
\[
G(\lambda,\mu)=\lambda\mu\left(1-(\lambda-\mu)^2\right)^2\left(1-(\lambda+\mu)^2\right)^3(1-\lambda^2+\lambda\mu-\mu^2)\left(1-(\lambda-\mu)^2(2-\lambda^2-\lambda\mu-\mu^2)\right).
\]
Since $G(\lambda,\mu)>0$ in the interior of $\Omega$, and since $\operatorname{Int}(R_\delta)\subset \Omega$, it follows that there are no critical points of $I$ in the interior of $R_\delta$. The lemma is proved.
\end{proof}

Before proving Theorem \ref{Berger Thm}, let us recall the definition of the energy functional. For any map $F:\big(T(a,b),g\big)\to S^n$, we define the energy functional of $F$ as 
\[
E(F)=\frac{1}{2}\int_{T(a,b)} |\nabla F|^2\,dv_g.
\]
The following proposition from \cite{el1996premiere} describes maps whose energy ratio is invariant under the action of the conformal group of the sphere.

\begin{proposition}[\textup{\cite{el1996premiere}}]
Let $(A_i)_{1\leq i\leq n}$ be real numbers such that $\sum\limits_{i=1}^{n}A_i^2=1$, and let $(p_i,q_i)_{1\leq i\leq n}$ be pairs of integers. For every $(a,b)\in \mathscr{M}$, consider the map $F_{a,b}:T(a,b)\to S^{2n-1}$ given by 
 \[
 F_{a,b}=\Big(A_1f^{a,b}_{p_1q_1},A_1 g^{a,b}_{p_1q_1},\dots, A_nf^{a,b}_{p_nq_n},A_n g^{a,b}_{p_nq_n}\Big).
 \]
 Then, for every $\gamma\in G(2n-1)$, $(a,b)\in \mathscr{M}$ and $(a',b')\in\mathscr{M}$, we have
 \begin{equation}\label{identity}
 \frac{E(\gamma\circ F_{a,b})}{E(\gamma\circ F_{a',b'})}=\frac{E(F_{a,b})}{E(F_{a',b'})}.
\end{equation}
\end{proposition}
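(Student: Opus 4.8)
The plan is to prove that $E(\gamma\circ F_{a,b})$ factors as a product of a quantity depending only on $(a,b)$ and one depending only on $\gamma$; once this separation is established, \eqref{identity} is immediate since the $\gamma$-factor cancels in the ratio. The starting observation is a pointwise one: writing $v^{a,b}_{pq}=(q,\frac{p-qa}{b})$ for the frequency vector, so that $\lambda^{a,b}_{pq}=4\pi^2|v^{a,b}_{pq}|^2$, each pair $(f^{a,b}_{pq},g^{a,b}_{pq})=(\cos\theta,\sin\theta)$ with $\theta=2\pi\langle v^{a,b}_{pq},(x,y)\rangle$ satisfies $|\nabla f^{a,b}_{pq}|^2+|\nabla g^{a,b}_{pq}|^2=4\pi^2|v^{a,b}_{pq}|^2$ identically. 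Summing over the $n$ pairs shows that $|\nabla F_{a,b}|^2=\sum_{i}A_i^2\lambda^{a,b}_{p_iq_i}=:\Lambda(a,b)$ is \emph{constant} on $T(a,b)$.

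Next I would reduce to a convenient $\gamma$. Every $\gamma\in G(2n-1)$ is $\xi\circ\gamma_0$ with $\xi$ orthogonal and $\gamma_0$ of the form \eqref{map gamma}; since $\xi$ is an isometry of $S^{2n-1}$ it preserves energy, so $E(\gamma\circ F_{a,b})=E(\gamma_0\circ F_{a,b})$ and it suffices to treat $\gamma$ of the form \eqref{map gamma}. For such $\gamma$ the conformal factor scales the energy density pointwise, so that the right-hand side of \eqref{area function} computes the energy $E(\gamma\circ F_{a,b})$ of an arbitrary map; using the constancy of $|\nabla F_{a,b}|^2$ I can pull it out to get
\[
E(\gamma\circ F_{a,b})=\frac{\Lambda(a,b)}{2}\int_{T(a,b)}\frac{1-|\gamma|^2}{\big(1-\langle F_{a,b},\gamma\rangle\big)^2}\,dv_g.
\]
The proposition thus reduces to showing that this integral equals $b\,D(\gamma)$ for some $D(\gamma)$ independent of $(a,b)$.

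To isolate the $(a,b)$-dependence I would factor $F_{a,b}=\Phi\circ L_{a,b}$, where $\Phi(\theta)=(A_1\cos\theta_1,A_1\sin\theta_1,\dots,A_n\cos\theta_n,A_n\sin\theta_n)$ is a \emph{fixed} map $(\mathbb{R}/2\pi\mathbb{Z})^n\to S^{2n-1}$ and $L_{a,b}(x,y)=\big(2\pi\langle v^{a,b}_{p_iq_i},(x,y)\rangle\big)_{i}$ is linear. The integrand then becomes $H\circ L_{a,b}$ for the fixed function $H(\theta)=\frac{1-|\gamma|^2}{(1-\langle\Phi(\theta),\gamma\rangle)^2}$, which is smooth because $|\gamma|<1$ keeps the denominator bounded away from zero. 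Expanding $H=\sum_{k\in\mathbb{Z}^n}c_k e^{i\langle k,\theta\rangle}$ (absolutely convergent) and integrating term by term, I use that $\langle k,L_{a,b}(x,y)\rangle=2\pi\langle\sum_i k_i v^{a,b}_{p_iq_i},(x,y)\rangle$ and that, in the dual basis of $\Gamma(a,b)^*$, $\sum_i k_i v^{a,b}_{p_iq_i}=(\sum_i k_iq_i)\,e_1^*+(\sum_i k_ip_i)\,e_2^*$. Hence $\int_{T(a,b)}e^{i\langle k,L_{a,b}\rangle}\,dv_g$ equals the area $b$ when $\sum_i k_iq_i=\sum_i k_ip_i=0$ and vanishes otherwise, so the integral equals $b\,D(\gamma)$ with $D(\gamma):=\sum_{k\in K}c_k$ and $K=\{k:\sum_i k_iq_i=0,\ \sum_i k_ip_i=0\}$.

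Combining the steps gives $E(\gamma\circ F_{a,b})=\frac{b}{2}\Lambda(a,b)\,D(\gamma)$, the desired product form, and \eqref{identity} follows at once. The main obstacle — and the heart of the argument — is the third step: one must recognize that the vanishing of the frequency combination $\sum_i k_i v^{a,b}_{p_iq_i}$ is governed by the integer conditions $\sum_i k_iq_i=\sum_i k_ip_i=0$, which are \emph{independent of $(a,b)$}. This is precisely where the special affine shape $v^{a,b}_{pq}=(q,(p-qa)/b)$ of the frequencies, together with the fact that the same integer pairs $(p_i,q_i)$ are used on both tori, enters; it is what forces the surviving index set $K$, and hence $D(\gamma)$, to be the same for $T(a,b)$ and $T(a',b')$.
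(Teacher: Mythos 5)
Your proof is correct. There is, however, nothing in the paper to compare it against: the paper states this proposition with a citation to El Soufi--Ilias--Ros \cite{el1996premiere} and gives no proof, so your argument supplies what the paper omits. Your route is also the natural one underlying the cited result, and the key steps all check out: the energy density $|\nabla F_{a,b}|^2=\sum_i A_i^2\lambda^{a,b}_{p_iq_i}$ is indeed constant; post-composition with an orthogonal map preserves energy, so one may assume $\gamma$ has the form \eqref{map gamma}; for such $\gamma$ the pointwise identity $|d(\gamma\circ F)|^2=\rho_\gamma^2(F)\,|dF|^2$, with $\rho_\gamma^2$ the conformal factor of $\gamma$, holds for arbitrary maps and not just conformal immersions, so the integral formula \eqref{area function} really does compute $E(\gamma\circ F_{a,b})$; and the character-orthogonality step is right, since $\sum_i k_i v^{a,b}_{p_iq_i}=(\sum_i k_iq_i)e_1^*+(\sum_i k_ip_i)e_2^*$ in the dual lattice basis, so the surviving index set $K$ is determined by the integers $(p_i,q_i)$ alone and is independent of $(a,b)$. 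This yields the separated form $E(\gamma\circ F_{a,b})=\tfrac{b}{2}\Lambda(a,b)D(\gamma)$, from which \eqref{identity} is immediate. Two details worth making explicit: the conformal factor is positive, hence $D(\gamma)>0$ and all ratios in \eqref{identity} are well defined (assuming, as is implicit in the statement, that $F_{a,b}$ is nonconstant); and since two-dimensional energy is a conformal invariant of the domain metric, your computing with the flat metric $g_\Gamma$ is legitimate whatever metric in the conformal class is used to define $E$. Conceptually, your Fourier computation proves that all the maps $F_{a,b}$ push the normalized volume measure onto the Haar measure of one and the same closed subtorus of $S^{2n-1}$ --- a Weyl-equidistribution statement --- which is exactly the mechanism behind the invariance \eqref{identity} in \cite{el1996premiere}.
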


For convenience, we denote $\Lambda_1(a,b)$ as the first conformal eigenvalue of the flat torus $\Lambda_1(T(a,b),[g_{\Gamma}])$. Moreover, note that for the conformal map $f$, we have $E(f) = A(f)$.
\begin{proof}[\textbf{Proof of Theorem 1.1}]
Let $\mathscr{M}_1=\Big\{(a,b)\in\mathscr{M}: b\leq \sqrt{2}\Big\}$, and $\mathscr{M}_2=\mathscr{M}\setminus\mathscr{M}_1$. For the torus $T(a,b),$ $(a,b)\in \mathscr M_1$, let $\Phi_{\sqrt{2}}\colon T_{\Gamma}\to S^3$ be the map induced from
    \[\Phi_{\sqrt{2}}(x,y)=\frac{1}{\sqrt{3}}\Big(\sqrt{2}\cos\frac{2\pi y}{b},\sqrt{2}\sin\frac{2\pi y}{b},\cos{2\pi (x-\frac{a}{b}y)},\sin{2\pi (x-\frac{a}{b}y)}\Big).\]
For all $g\in [g_{\Gamma}]$, it follows from Hersch's lemma \cite{hersch1970quatre} that there exists $\gamma\in G(3)$ such that for any $j\leq4$, $\int_{T_\Gamma}(\gamma\circ\Phi_{\sqrt{2}})_j\,dv_g=0$. Using the min-max principle and summing up, we have
\begin{equation*}
\begin{aligned}
\lambda_1(g) A(g) \leq \int_{T_{\Gamma}} |d(\gamma \circ \Phi_{\sqrt{2}})|^2 \, dv_g = 2 E(\gamma \circ \Phi_{\sqrt{2}}).
\end{aligned}
\end{equation*}
By the equality \eqref{identity}, we obtain
\[
E(\gamma \circ \Phi_{\sqrt{2}}) = A(\gamma \circ \psi_{\sqrt{2}}) \cdot \frac{E(\Phi_{\sqrt{2}})}{A(\psi_{\sqrt{2}})}.
\]
Therefore,
\begin{equation*}
\begin{aligned}
\lambda_1(g) A(g) &\leq 2 A(\gamma \circ \psi_{\sqrt{2}}) \cdot \frac{E(\Phi_{\sqrt{2}})}{A(\psi_{\sqrt{2}})}.
\end{aligned}
\end{equation*}
Moreover, by Lemma \ref{main lemma}, we have
\[
\sup\limits_{\gamma\in G(3)}A(\gamma \circ \psi_{\sqrt{2}})=\frac{4\sqrt{2}\pi^2}{3}.
\]
Thus, 
\begin{equation*}
\begin{aligned}
\lambda_1(g) A(g) &\leq 2\sup_{\gamma \in G(3)}A(\gamma \circ \psi_{\sqrt{2}}) \cdot \frac{E(\Phi_{\sqrt{2}})}{A(\psi_{\sqrt{2}})}\\
&=\frac{8\pi^2 \sqrt{2}}{3} \cdot \frac{E(\Phi_{\sqrt{2}})}{A(\psi_{\sqrt{2}})} \\
&= \frac{4\pi^2}{3b} (a^2 + b^2 + 2).
\end{aligned}
\end{equation*}
For the torus $T(a,b),$ $(a,b)\in \mathscr M_2$, let $\Phi_{b}\colon T_{\Gamma}\to S^3$ be the map induced from
    \[\Phi_{b}(x,y)=\frac{1}{\sqrt{1+b^2}}\Big(b\cos\frac{2\pi y}{b},b\sin\frac{2\pi y}{b},\cos{2\pi (x-\frac{a}{b}y)},\sin{2\pi (x-\frac{a}{b}y)}\Big).\]
Similarly to the case above, using $\gamma\circ\Phi_{b}$ as test functions for $\lambda_1$, we obtain
\begin{equation*}
     \begin{aligned}
\lambda_1(g)A(g)&\leq \int_{T_{\Gamma}}|d(\gamma\circ\Phi_{b})|^2\,dv_g=2E(\gamma\circ\Phi_{b})\\
&=2A(\gamma\circ\psi_{b})\cdot \frac{E(\Phi_{b})}{A(\psi_{b})}\\
&\leq 2\sup\limits_{\gamma\in G(3)}A(\gamma\circ\psi_{b})\cdot \frac{E(\Phi_{b})}{A(\psi_{b})}\\
&=\frac{8\pi^2\sqrt{b^2+1}}{3\sqrt{3}}\frac{(2b^2+a^2)}{b^3}.
    \end{aligned}
     \end{equation*} 
Note that $0\leq a\leq\frac{1}{2}.$ Then for any $(a,b)\in \mathscr{M}_1,$ we have
\begin{equation*}
     \begin{aligned}
     \Lambda_1(a,b)\leq\frac{4\pi^2}{3b}(\frac{9}{4}+b^2).
\end{aligned}
     \end{equation*}
For any $(a,b)\in \mathscr{M}_2,$ we have
\begin{equation*}
     \begin{aligned}
     \Lambda_1(a,b)\leq\frac{2\pi^2\sqrt{b^2+1}}{3\sqrt{3}}\frac{8b^2+1}{b^3}.
\end{aligned}
     \end{equation*}
Therefore, we obtain
 \begin{equation*}
     \begin{aligned}
\Lambda_1(T)&=\sup\limits_{(a,b)\in\mathscr{M}}\Lambda_1(a,b)\\
&\leq\sup\limits_{(a,b)\in\mathscr{M}_1}\frac{4\pi^2}{3b}\Big(\frac{9}{4}+b^2\Big)\\
     &=\frac{8\pi^2}{\sqrt{3}}.
     \end{aligned}
     \end{equation*}
The equality holds if and only if $(a,b)=(\frac{1}{2}, \frac{\sqrt{3}}{2})$.
Moreover, by El Soufi-Ilias-Ros \cite{el1996premiere}, El Soufi-Ilias \cite{el2003extremal}, the metric attaining $\Lambda_1(\frac{1}{2}, \frac{\sqrt{3}}{2})$ must be flat. The proof is done.
    \end{proof}

\begin{proof}[\textbf{Proof of Corollary 1.2}]
 Let $\Phi_{b_{0}}\colon T_{\Gamma}\to S^3$ be the map induced from
    \[\Phi_{b_{0}}(x,y)=\frac{1}{\sqrt{1+b_0^2}}\Big(b_0\cos\frac{2\pi y}{b},b_0\sin\frac{2\pi y}{b},\cos{2\pi (x-\frac{a}{b}y)},\sin{2\pi (x-\frac{a}{b}y)}\Big),\]
    where $b_0\in[1,+\infty)$. 
    
For all $g\in [g_{\Gamma}]$, similarly, using $\gamma\circ\Phi_{b_{0}}$ as test functions for $\lambda_1$, we obtain
         \begin{equation}\label{mian inequality}
     \begin{aligned}
\lambda_1(g)V(g)&\leq \int_{T_{\Gamma}}|d(\gamma\circ\Phi_{b_{0}})|^2\,dv_g=2E(\gamma\circ\Phi_{b_{0}})\\
&=2A(\gamma\circ\psi_{b_0})\cdot \frac{E(\Phi_{b_{0}})}{A(\psi_{b_0})}.
    \end{aligned}
     \end{equation} 
When $1\leq b_0\leq\sqrt{2}$, by \eqref{case 1}, we have
      \begin{equation*}
     \begin{aligned}
     \lambda_1(g)V(g)&\leq2\sup\limits_{\gamma\in G(3)}\frac{A(\gamma\circ\psi_{b_0})}{A(\psi_{b_0})}\cdot E(\Phi_{b_{0}})\\
&=2E(\Phi_{b_{0}})   \\
&=\frac{4\pi^2}{b}\frac{({b_0}^2+a^2+b^2)}{1+b_0^2}.
\end{aligned}
     \end{equation*}
     Then,
     \begin{equation*}\label{b-case 1}
     \begin{aligned}
     \lambda_1(g)V(g)&\leq\inf\limits_{1\leq b_0\leq\sqrt{2}}\frac{4\pi^2}{b}\frac{({b_0}^2+a^2+b^2)}{1+b_0^2}\\
&=\frac{4\pi^2}{3b}(a^2+b^2+2).
\end{aligned}
     \end{equation*}
 When $b_0>\sqrt{2}$, by \eqref{case 2}, we have
 \begin{equation}
     \begin{aligned}\label{b-case 2}
    \lambda_1(g)V(g)&\leq2\sup\limits_{\gamma\in G(3)}A(\gamma\circ\psi_{b_0})\cdot \frac{E(\Phi_{b_{0}})}{A(\psi_{b_0})}\\
    &= 2\sup\limits_{\gamma\in G(3)}A(\gamma\circ\psi_{b_0})\cdot\frac{b_0^2+a^2+b^2}{2b_0b}\\
    &= \frac{8\pi^2\sqrt{b_0^2+1}}{3\sqrt{3}b_0^2}\frac{(b_0^2+a^2+b^2)}{b}.
  \end{aligned}
     \end{equation}
Let $F(b_0)=\frac{8\pi^2\sqrt{b_0^2+1}}{3\sqrt{3}b_0^2}\frac{(b_0^2+a^2+b^2)}{b},$ $b_0>\sqrt{2}.$ By the direct computation, we have 
\[
\inf\limits_{b_0>\sqrt{2}}F(b_0)=F(b_{0}^{'}),
\]
where $b_{0}^{'}=\sqrt{a^2+b^2+\sqrt{(a^2+b^2)(8+a^2+b^2)}}/\sqrt{2}.$ Therefore, $\Lambda_1(a,b)\leq F(b_{0}^{'}),$ where $F(b_{0}^{'})$ is the right hand side of \eqref{eigenvalue estimation}.
Then we obtain 
\[
\Lambda_1(a,b)\leq\min\Big\{\frac{4\pi^2}{3b}(a^2+b^2+2),F(b_{0}^{'})\Big\}=F(b_{0}^{'}).
\]

When $a^2+b^2>1,$ the inequality $\eqref{eigenvalue estimation}$ is strict. Otherwise, there exists $g^{'}=\omega g_{\Gamma}$, $\omega>0$, such that $\lambda_1(g^{'})V(g^{'})$ is equal to the right hand side of $\eqref{eigenvalue estimation}$. This implies that equality holds in the inequalities of \eqref{mian inequality} and \eqref{b-case 2}. Then, for $\gamma=(\sqrt{3r_1-2},0,0,0)$ and $\Phi_{b_{0}^{'}}$, we have 
\[
\Delta_{g_{\Gamma}}(\gamma\circ\Phi_{b_{0}^{'}})=\lambda_1(g^{'})\omega(\gamma\circ\Phi_{b_{0}^{'}}).
\]
It is impossible. In fact,
\begin{equation*}
     \begin{aligned}
&(\gamma\circ\Phi_{b_{0}^{'}})_4\Delta_{g_{\Gamma}}{(\gamma\circ\Phi_{b_{0}^{'}})_2}-(\gamma\circ\Phi_{b_{0}^{'}})_2\Delta_{g_{\Gamma}}(\gamma\circ\Phi_{b_{0}^{'}})_4\\
&=\frac{1}{b^2\lambda^2\left(\sqrt{3r_1-2}b_{0}^{'} \cos\frac{2\pi y}{b}+\sqrt{(b_{0}^{'})^{2}+1}\right)^{3}}\Big((a^2+b^2-1)\sqrt{1+(b_{0}^{'})^2}\cos{2\pi(x-\frac{ay}{b})}\\
&+\sqrt{3r_1-2}(a^2+b^2+1)b_{0}^{'}\cos{\frac{2\pi y}{b}}\cos{2\pi(x-\frac{ay}{b})}-2a\sqrt{3r_1-2}b_{0}^{'}\sin{\frac{2\pi y}{b}}\sin{2\pi(x-\frac{ay}{b})}\Big)\\
&\neq 0.
\end{aligned}
\end{equation*}
\end{proof}

\begin{remark}
    By direct computation, for all $a^2+b^2>1,$ $\gamma\in D^4,$ and $b_0\in [1,+\infty),$ we have $(\gamma\circ\Phi_{b_{0}})_4\Delta_{g_{\Gamma}}{(\gamma\circ\Phi_{b_{0}})_2}\neq(\gamma\circ\Phi_{b_{0}})_2\Delta_{g_{\Gamma}}(\gamma\circ\Phi_{b_{0}})_4.$ Therefore, even if we add the restriction condition $\int_{T_\Gamma}\gamma\circ\Phi_{b_0}\,dv_g=0$, when taking the upper bound of the area functional, we cannot get a sharp estimate.
\end{remark}

\section*{\raggedright \textbf{Acknowledgements}}
We would like to thank Robert Bryant for his private correspondence, which provided valuable help in proving Lemma 2.1. We would like to thank my supervisor, Zhenlei Zhang, who introduced this problem to me and offered useful discussions. We are also grateful to the referees for their valuable comments and suggestions, which helped us improve the quality of the manuscript.

\bibliographystyle{amsalpha} % 或 plain, abbrv, alpha 等
\bibliography{main}

\end{document}